\newtheorem{theorem}{Theorem}[section]
\newtheorem{corollary}[theorem]{Corollary}
\newtheorem{lemma}[theorem]{Lemma}
\theoremstyle{definition}
\newtheorem{remark}[theorem]{Remark}
\newtheorem{rems}[theorem]{Remarks}
\numberwithin{equation}{section}
\def\R{\mathbb{R}}
\def\T{\mathbb{T}}
\def\D{\mathbb{D}}
\def\C{\mathbb{C}}
\def\N{\mathbb{N}}
\def\A{\mathcal{A}}
\def\B{\mathcal{B}}
\def\a{\alpha}
\def\b{\beta}
\def\l{\lambda}
\def\LL{\mathcal{L}L^1}
\def\Re#1{\operatorname{Re}#1}
\begin{document}

%%%%% To ease editing, for IMPAN journals add:

\baselineskip=17pt

%%%%%%%%%%%%%%%%

\title[A continuous-parameter Katznelson--Tzafriri theorem]{A continuous-parameter Katznelson--Tzafriri theorem for algebras of analytic functions}

\author{Charles Batty}
\address{St. John's College\\
University of Oxford\\
Oxford OX1 3JP, UK}
\email{charles.batty@sjc.ox.ac.uk}

\author{David Seifert}
\address{School of Mathematics, Statistics and Physics\\
 Newcastle University\\
 Newcastle upon Tyne, NE1 7RU, UK}
\email{david.seifert@ncl.ac.uk}

\date{}

\begin{abstract}
We prove a continuous-parameter version of the recent theorem of Katznelson--Tzafiri type for power-bounded operators which have a bounded calculus for analytic Besov functions.    We also show that the result can be extended to some operators which have functional calculi with respect to some larger algebras.
\end{abstract}

\subjclass[2020]{Primary 47A60; Secondary 47B15, 47D03}

\keywords{Katznelson--Tzafriri theorem, functional calculus, analytic Besov function, bounded $C_0$-semigroup}

\maketitle

\section{Introduction}

In 1986, Katznelson and Tzafriri \cite{KT86} proved a theorem concerning asymptotics of the discrete semigroup $(T^n)_{n\ge0}$ for a power-bounded operator $T$ on a complex Banach space $X$.    They showed that $\lim_{n\to\infty} \|T^n(I-T)\| = 0$ if $\sigma(T) \cap \T \subseteq \{1\}$.    More generally, they considered functions in the Wiener algebra $W^+(\D)$ of the form $f(z) = \sum_{k=0}^\infty a_kz^k$, where $\sum_{k=0}^\infty |a_k| < \infty$ and $|z|\le1$.   Let $f(T) = \sum_{k=0}^\infty a_kT^k$.    Let $W(\T)$ be the space of all functions on $\T$ of the form $g(z) = \sum_{k=-\infty}^\infty b_k z^k$, where $\|g\|_{W(\T)}:=\sum_{k=-\infty}^\infty |b_k|<\infty$.   It was shown in \cite{KT86} that $\lim_{n\to\infty} \|T^nf(T)\|=0$ if $f \in W^+(\D)$ and $f$ is of spectral synthesis in $W(\T)$ with respect to $\sigma(T) \cap \T$.   This assumption means that there exist functions $(g_k)_{k\ge1}$ in $W(\T)$ such that each $g_k$ vanishes on a neighbourhood $U_k$ of $\sigma(T) \cap \T$ in $\T$ and $\lim_{k\to\infty} \|g_k-f\|_{W(\T)}=0$.   

These theorems have had a variety of applications.  For a selection of them, see Section 4 of the recent survey article \cite{BS21}.   One drawback of the more general theorem is the assumption of spectral synthesis, which is stronger than simply assuming that $f$ vanishes on $\sigma(T) \cap \T$.  The theorem is not true in general if the weaker assumption is used, but the two assumptions are equivalent if $\sigma(T) \cap \T$ is countable, for example.   If $X$ is a Hilbert space, then the weaker assumption is sufficient \cite{Lek09}.    The survey article \cite{BS21} covers these and other variants of the theorem, mainly in the discrete case. 

The following theorem is an analogue of the original theorem for bounded $C_0$-semigroups, and it was proved in \cite{ESZ2} and \cite{Vu92}.  Their proofs were quite different from the proofs in \cite{KT86} and from each other.    For a discussion of other proofs, see \cite[Section 3.1]{BS21}.  In this paper we shall closely follow the approach used in \cite{Vu92}.

\begin{theorem}  \label{KT}
Let $-A$ be the generator of a bounded $C_0$-semigroup $(T(t))_{t\ge0}$ on a Banach space $X$.   Let $f$ be the Laplace transform of a function in $L^1(\R_+)$, and assume that $f$ is of spectral synthesis in $L^1(\R)$ with respect to $\sigma(A) \cap i\R$.   Then $\lim_{t\to\infty} \|T(t)f(A)\| = 0$.
\end{theorem}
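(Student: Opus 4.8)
Following \cite{Vu92}, the plan is to realise $T(t)f(A)$ as the value at $t$ of a bounded, uniformly continuous, operator-valued function whose Laplace transform extends holomorphically across the imaginary axis, and then to apply an Ingham-type Tauberian theorem. Write $f=\mathcal{L}h$ with $h\in L^1(\R_+)$, put $M:=\sup_{t\ge0}\|T(t)\|$, and set $F(t):=T(t)f(A)$ for $t\ge0$, a function with values in $B(X)$. Since $T(t)f(A)=\int_0^\infty h(s)T(t+s)\,ds$ is the operator associated by the Hille--Phillips calculus with the right translate $h_t$ of $h$ (where $h_t(s)=h(s-t)$ for $s\ge t$ and $h_t(s)=0$ otherwise), we have $\|F(t)\|\le M\|h_t\|_{L^1}=M\|h\|_{L^1}$, so $F$ is bounded; and since $\|g(A)\|\le M\|\mathcal{L}^{-1}g\|_{L^1}$ for $g\in\LL(\R_+)$ while translation is continuous on $L^1(\R)$, the map $F$ is uniformly continuous on $\R_+$. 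For $\Re\lambda>0$ the Laplace transform $\widehat F(\lambda):=\int_0^\infty e^{-\lambda t}F(t)\,dt$ exists as a Bochner integral and equals $(\lambda+A)^{-1}f(A)$.

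The core of the proof is to show that $\widehat F$ extends to a holomorphic $B(X)$-valued function on an open neighbourhood of the closed right half-plane $\overline{\C_+}$ --- equivalently, that the ``half-line spectrum'' of $F$ is empty. Since $-A$ generates a bounded semigroup, $\sigma(A)\subseteq\overline{\C_+}$, so $(\lambda+A)^{-1}$ is already holomorphic on $\C\setminus(-\sigma(A))$, and the only points of $i\R$ at which $\widehat F$ requires attention are those of the form $\lambda_0=-i\beta_0$ with $i\beta_0\in E:=\sigma(A)\cap i\R$. Near such a $\lambda_0$ one has to show that the factor $f(A)$ cancels the singularity of $(\lambda+A)^{-1}$.

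This is where the spectral-synthesis hypothesis is essential. By assumption $f$ is the limit in $\mathcal{F}L^1(\R)$ of functions $g_k$ vanishing on neighbourhoods of $E$; in particular $f$ vanishes on $E$, and the idea is to write $F=T(\cdot)f(A)$ as a limit, uniform in $t$, of operator-valued functions whose Laplace transforms are holomorphic near $\lambda_0$ --- a property then inherited by $\widehat F$. The difficulty is that the natural approximants, $T(\cdot)g_k(A)$, require the operators $g_k(A)$, which are not directly available because the $g_k$ need not be Laplace transforms of functions in $L^1(\R_+)$. A workable route is to verify the desired extension first on a convenient dense subalgebra of $\LL(\R_+)$ --- on which the necessary algebraic manipulations (using, for instance, the identity $(\lambda+A)^{-1}A=I-\lambda(\lambda+A)^{-1}$ together with $f|_E=0$) can be carried out by hand --- and then to pass to the general case by a closedness argument, exploiting that $g\mapsto g(A)$ is bounded and $(T(t))_{t\ge0}$ is power bounded.

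Granting the holomorphic extension, the theorem follows from an Ingham-type Tauberian theorem for operator-valued functions: if $F\colon\R_+\to B(X)$ is bounded and uniformly continuous and $\widehat F$ extends holomorphically to a neighbourhood of $\overline{\C_+}$, then $\int_0^t F(s)\,ds$ converges as $t\to\infty$, and hence $\|F(t)\|\to0$ by uniform continuity; that is, $\|T(t)f(A)\|\to0$. The estimates on $F$ and this Tauberian step are routine; I expect the genuine obstacle to be the extension step, i.e.\ passing from the analytic hypothesis of spectral synthesis in $\mathcal{F}L^1(\R)$ --- the algebra in which the approximants naturally live --- to an operator-theoretic statement about $(\lambda+A)^{-1}f(A)$, the point being that no nonzero element of the smaller algebra $\LL(\R_+)$ can vanish on a subarc of $i\R$, so one cannot simply reduce to an $f$ that vanishes near $E$.
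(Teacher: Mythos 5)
Your route (Ingham-type Tauberian theorem applied to $F(t)=T(t)f(A)$) is genuinely different from the one the paper relies on for this statement, which is quoted from \cite{ESZ2} and \cite{Vu92} and, in the form the paper uses elsewhere, proceeds via V\~u's limit isometric semigroup: one passes to $Y$ with $\|\pi x\|=\limsup_{t\to\infty}\|T(t)x\|$, notes that the negative generator $Z$ of the induced isometric group satisfies $\sigma(Z)\subseteq\sigma(A)\cap i\R$, uses spectral synthesis to get $f(Z)=0$ and hence strong convergence, and lifts to the operator norm via the left-multiplication semigroup on $\{S\in L(X):t\mapsto T(t)S \text{ is norm continuous}\}$ and a bounded approximate identity for $\LL$. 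The trouble with your plan is precisely the step you flag as the ``genuine obstacle'': the holomorphic extension of $\widehat F(\lambda)=(\lambda+A)^{-1}f(A)$ to a neighbourhood of $\overline{\C_+}$ is not just unproved, it is false in general under the hypotheses of the theorem, and so is the weaker statement that any such Tauberian theorem would deliver, namely norm convergence of $\int_0^t T(s)f(A)\,ds$. Concretely, let $X=\ell^2$ (or $c_0$), let $A$ be the diagonal operator with entries $1/n$, so $(T(t))_{t\ge0}$ is a contraction semigroup and $\sigma(A)\cap i\R=\{0\}$, a singleton and hence a set of spectral synthesis. Take $h(s)=(1+s)^{-2}-2(1+s)^{-3}\in L^1(\R_+)$ and $f=\mathcal{L}h$; then $f(0)=\int_0^\infty h(s)\,ds=0$, so all hypotheses hold, but $f(x)\sim -x\log(1/x)$ as $x\to0+$, so $n|f(1/n)|\to\infty$. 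Hence $\|\widehat F(x)\|=\sup_n |f(1/n)|/(x+1/n)\ge \tfrac{n}{2}|f(1/n)|$ at $x=1/n$ is unbounded as $x\to0+$ inside $\C_+$, so $\widehat F$ admits no bounded, let alone holomorphic, extension near $0$; moreover $\int_0^t T(s)f(A)\,ds=\operatorname{diag}\bigl(nf(1/n)(1-e^{-t/n})\bigr)$ has norm of order $\log t$, so the improper integral diverges, even though $\|T(t)f(A)\|\to0$ as the theorem asserts. So no cancellation argument, on a dense subalgebra or otherwise, can supply the extension: the intermediate statement you are trying to prove is strictly stronger than the theorem and fails at a single point of peripheral spectrum of spectral synthesis.

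Your preliminary observations are fine ($F$ bounded and uniformly continuous, $\widehat F(\lambda)=(\lambda+A)^{-1}f(A)$ on $\C_+$, and the deduction of $\|F(t)\|\to0$ from convergence of the integral by uniform continuity), but the engine of the proof must be different: the spectral synthesis hypothesis has to be used to kill $f$ on an object whose spectrum is exactly $\sigma(A)\cap i\R$ (the limit isometric semigroup, or in \cite{ESZ2} the quotient by a primary ideal, or in \cite{KT86}-style arguments a convolution with kernels concentrating near the peripheral spectrum), not to regularise the resolvent $(\lambda+A)^{-1}f(A)$ across $i\R$, which in general it cannot do.
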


Some variants of this continuous-parameter version of the Katznelson--Tzafriri theorem have been obtained.   If $X$ is a Hilbert space, the  result is true with the weaker assumption that $f$ vanishes on $\sigma(A) \cap i\R$ \cite{Sei15}.   Several extensions to more general semigroups of operators have appeared in \cite{Sei15} and \cite{Zar13}.   Some papers have considered Banach algebras other than the Laplace transforms of integrable functions.   An example in the discrete case is a Banach algebra $\B(\D)$ of analytic Besov functions on the unit disc $\D$, and Peller showed in \cite{Pel82} that any power-bounded operator on a Hilbert space has a bounded $\B(\D)$-calculus.   In the continuous-parameter case, the theory of functional calculus for the corresponding algebra $\B$ of analytic Besov functions on the right half-plane $\C_+$ and bounded $C_0$-semigroups has recently been developed in \cite{BGT} and \cite{BGT2}.   

In \cite{BS22}, we proved a version of the Katznelson--Tzafriri theorem in the discrete case where $T$ is assumed to have a bounded functional calculus with respect to the Banach algebra $\B(\D)$.  It applies to functions $f \in \B(\D)$ which vanish on $\sigma(T) \cap \T$, so it includes not only some functions outside $W^+(\D)$, but also functions in $W^+(\D)$ which are not of spectral synthesis with respect to $\sigma(T) \cap \T$.  When we wrote \cite{BS22}, we did not see how to obtain a corresponding result in the continuous-parameter case.   Subsequently we have been able to prove the result stated in Theorem~\ref{KTB} below, by using an  approximation argument  based on Arveson's  theory of spectral subspaces for $C_0$-groups of isometries; see  \cite{Sei15} for a related argument. We also rely crucially on results by Cojuhari and Gomilko \cite{CG08} concerning a certain integral resolvent condition which characterises those operators that admit a bounded $\B$-calculus.

We now state the main result of our paper for operators which have a bounded $\B$-calculus.    The assumption that $f$ vanishes at infinity is natural in the result because infinity can be thought of as being an invisible part of $\sigma(A)$ when $A$ is unbounded;  indeed without this assumption the theorem would be false if  $\sigma(A) \cap i\R$ is empty, $(T(t))_{t\ge0}$ is not exponentially stable, and $f$ is a constant function.    In the statement, the operator $f(A)$ is defined by the $\B$-calculus for $A$.  

\begin{theorem} \label{KTB}
Let $-A$ be the generator of a bounded $C_0$-semigroup $(T(t))_{t\ge0}$ and assume that $A$ has a bounded $\B$-calculus.   Let $f \in \B$, and assume that $f$ vanishes on $\sigma(A) \cap i\R$ and $ \lim_{z\in\C_+, \,|z|\to\infty} f(z) = 0$.  Then
\[
\lim_{t\to\infty} \|T(t)f(A)\| = 0.
\]
\end{theorem}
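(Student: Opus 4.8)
The plan is to adapt the method of \cite{Vu92} to the Besov setting, replacing the spectral synthesis hypothesis of Theorem~\ref{KT} by Arveson's theory of spectral subspaces for $C_0$-groups of isometries (in the spirit of \cite{Sei15}), and using the resolvent characterisation of the bounded $\B$-calculus due to Cojuhari and Gomilko \cite{CG08} to push the relevant structure through a limiting construction. I would argue by contradiction. Since $(T(t))_{t\ge0}$ is bounded and $t\mapsto T(t)f(A)$ is a bounded, strongly continuous $\mathcal{L}(X)$-valued map, suppose $\limsup_{t\to\infty}\|T(t)f(A)\|=2c>0$, and choose $t_n\to\infty$ and unit vectors $x_n\in X$ with $\|T(t_n)f(A)x_n\|\ge c$ for all $n$. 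A standard limiting argument --- via an ultrapower of $X$, or a second-dual construction, as in \cite{Vu92, Sei15} --- then produces a Banach space $\widehat X_0$, spanned by a single bounded complete orbit built from the orbits $t\mapsto T(t+t_n)x_n$, on which $(T(t))_{t\ge0}$ extends to a bounded $C_0$-group $(\widehat U(t))_{t\in\R}$; renorming $\widehat X_0$ so as to make this group isometric, let $-\widehat A$ be its generator. The construction yields a vector $\widehat z\in\widehat X_0$ (the limit of the vectors $T(t_n)x_n$) and, since $f(A)$ is approximable in norm by operators in the span of $\{T(t):t\ge0\}$, an operator $\widehat S$ on $\widehat X_0$ induced by $f(A)$, with $\widehat S\widehat z$ the corresponding limit of $T(t_n)f(A)x_n$; in particular $\|\widehat S\widehat z\|\ge c$.

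The next step is to transfer the hypotheses to $\widehat A$. Because $\widehat X_0$ is built from orbits $T(\cdot)x_n$ of $(T(t))_{t\ge0}$, whose Carleman spectra lie in $\sigma(A)\cap i\R$, the limit generator satisfies $\sigma(\widehat A)\subseteq\sigma(A)\cap i\R$, so $f$ vanishes on $\sigma(\widehat A)$. By \cite{CG08}, the boundedness of the $\B$-calculus of $A$ is equivalent to an integral estimate on powers of the resolvent of $A$; such an estimate is inherited by restrictions to closed invariant subspaces, is unaffected by renorming, and survives the limiting construction, so $\widehat A$ again has a bounded $\B$-calculus. Moreover the calculi are compatible: $\widehat S=f(\widehat A)$. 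Hence $\|f(\widehat A)\widehat z\|\ge c$, and it remains only to show that $f(\widehat A)=0$.

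To prove $f(\widehat A)=0$, I would decompose $\widehat X_0$ along the Arveson spectral subspaces of $(\widehat U(t))_{t\in\R}$. The assumption $f(z)\to0$ as $|z|\to\infty$ handles the part of the spectrum near infinity: on the spectral subspace associated with $\{i\b:|\b|\ge R\}$ the operator $f(\widehat A)$ has norm at most a constant times $\sup_{|\b|\ge R}|f(i\b)|$, by the uniform $\B$-calculus bound from \cite{CG08}, and this is small for $R$ large. On the complementary spectral subspace $\widehat A$ is bounded with spectrum a compact subset of $\sigma(A)\cap i\R$, and one splits this subspace into finitely many pieces on each of which $\widehat A$ is spectrally close to a point $i\b_0$ with $f(i\b_0)=0$; continuity of $f$ up to the boundary of $\C_+$, together with the same uniform calculus estimate, then bounds $f(\widehat A)$ by something small on each piece. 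Letting $R\to\infty$ and the mesh tend to $0$ gives $f(\widehat A)=0$, contradicting $\|f(\widehat A)\widehat z\|\ge c$.

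I expect the main obstacle to be precisely this last step: showing that for a $C_0$-group of isometries with a bounded $\B$-calculus, a function in $\B$ vanishing on $\sigma(\widehat A)$ and at infinity must annihilate the calculus --- equivalently, that every closed subset of $i\R\cup\{\infty\}$ behaves as a set of spectral synthesis for the $\B$-calculus. This is the substitute for the spectral synthesis hypothesis of Theorem~\ref{KT}, and the continuous analogue of the phenomenon underlying \cite{BS22}. Making the patching of the local estimates rigorous --- in particular distilling from \cite{CG08} a form of the resolvent estimate that is uniform enough to be glued across the Arveson spectral subspaces --- is where the real work lies. The remaining ingredients (the limiting construction and its strong continuity, the inclusion $\sigma(\widehat A)\subseteq\sigma(A)\cap i\R$, the compatibility of the calculi, and the stability of the Cojuhari--Gomilko condition) are comparatively routine, if still requiring care.
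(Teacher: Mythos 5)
Your overall strategy---pass to a limit isometric group, decompose along Arveson spectral subspaces, and drive the argument with the Cojuhari--Gomilko resolvent condition---is the same as the paper's, but your write-up stops precisely at the step that constitutes the real content of the proof, and you say so yourself. The claims that on the spectral subspace for $\{i\b:|\b|\ge R\}$ one has $\|f(\widehat A)\|\lesssim\sup_{|\b|\ge R}|f(i\b)|$, and that on compact spectral pieces $f(\widehat A)$ is controlled by $\sup|f|$ near the local spectrum, amount to asserting that the $\B$-calculus of a skew-hermitian operator is dominated by sup-norms on neighbourhoods of the spectrum; this is not a ``uniform $\B$-calculus bound from \cite{CG08}'', since the $\B$-norm involves integrated derivatives and gives no such local sup-norm control by itself. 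The paper proves exactly this localisation, and only for \emph{bounded} skew-hermitian $Z$: Lemma~\ref{lem1} constructs a bounded $C(K)$-calculus on a compact neighbourhood $K$ of $\sigma(Z)$ via the estimate $\|\Phi_{C_0}^Z(f)\|\le\frac{C}{2\pi}\|f\|_{C(K)}$, obtained from \cite[Lemma 3.4]{CG08} together with \cite[Theorem 3.6]{dLK95} and a dominated-convergence argument showing that the contribution of $\R\setminus K$ to the limit formula vanishes (this step uses the boundedness of $Z$); spectral mapping and a Urysohn-type factorisation then give $\Psi(g)=0$ whenever $g$ vanishes on $\sigma(Z)$, and Lemma~\ref{lem2} reduces the unbounded case to this by using the spectral subspaces $Y_k$ for the intervals $[-k,k]$, whose union is dense. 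Notably, the paper never needs any estimate on the spectral subspace for $\{|\b|\ge R\}$, where the generator is unbounded and the $C(K)$ argument does not apply; your proposed estimate there is exactly the kind of statement whose proof you defer as ``where the real work lies''. Identifying the obstacle is not resolving it, so the proposal has a genuine gap at its core.

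There are also problems in your limiting construction, which differs from the paper's. The paper does not argue by contradiction through an ultrapower: it first obtains strong convergence $T(t)f(A)\to0$ using V\~u's limit isometric semigroup ($\|\pi x\|_Y=\limsup_{t\to\infty}\|T(t)x\|$), and then upgrades to the operator norm by applying the same result to the left-multiplication semigroup on $\mathcal X=\{S\in L(X): t\mapsto T(t)S \text{ is norm-continuous}\}$ and using a bounded approximate unit $(g_n)$ of $\LL$ with $\|fg_n-f\|_\B\to0$; this last step is the only place where the hypothesis that $f$ vanishes at infinity is used. In your version, several asserted facts require proofs: that the limit object is a bounded \emph{group} of invertible isometries is not automatic (in the paper, invertibility comes from the case distinction $i\R\not\subseteq\sigma(A)$, the approximate-eigenvalue inequality for isometric semigroups and the Hille--Yosida theorem); the claim that $f(A)$ ``is approximable in norm by operators in the span of $\{T(t):t\ge0\}$'' is false for general $f\in\B$ (and unnecessary---commutation of $f(A)$ with the semigroup is what induces the operator on the limit space); and transferring the (GSF) condition to an ultrapower needs an argument, since the dual of an ultrapower is much larger than the ultraproduct of duals, so the weak integral estimate must first be reformulated without duality. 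These points are repairable, but as written they are gaps on top of the missing central lemma.
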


Since Laplace transforms of functions in $L^1(\R_+)$ lie in $\B$ and vanish at infinity, this result extends Theorem~\ref{KT} in the case when $A$ admits a bounded $\B$-calculus, both by enlarging the class of functions to which it applies and by weakening the spectral synthesis condition to the condition that $f$ vanishes on $\sigma(A) \cap i\R$ (which is necessary for the conclusion of the theorem). Furthermore,  as the negative generator of every bounded $C_0$-semigroup on a Hilbert space admits a bounded $\B$-calculus (see \cite[Section~4]{BGT}), we obtain the following result as an immediate consequence of Theorem~\ref{KTB}.

\begin{corollary}  \label{KTH}
Let $-A$ be the generator of a bounded $C_0$-semigroup $(T(t))_{t\ge0}$ on a Hilbert space. Let $f \in \B$, and assume that $f$ vanishes on $\sigma(A) \cap i\R$ and $ \lim_{z\in\C_+,\,|z|\to\infty} f(z) = 0$.  Then $\lim_{t\to\infty} \|T(t)f(A)\| = 0.$
\end{corollary}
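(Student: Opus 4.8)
The plan is to deduce this directly from Theorem~\ref{KTB}, so essentially no work is needed beyond invoking an already-known fact about functional calculus on Hilbert space. The key point is the assertion, attributed in the excerpt to \cite[Section~4]{BGT}, that whenever $-A$ generates a bounded $C_0$-semigroup on a Hilbert space, the operator $A$ automatically admits a bounded $\B$-calculus. This is the continuous-parameter analogue of Peller's theorem \cite{Pel82} and rests on a quadratic-estimate (square-function) argument combined with the boundedness of the $H^\infty$-calculus, or alternatively on the Cojuhari--Gomilko resolvent characterisation \cite{CG08} mentioned in the introduction.

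Given this, I would argue as follows. Let $(T(t))_{t\ge0}$ be a bounded $C_0$-semigroup on a Hilbert space with generator $-A$, and let $f\in\B$ satisfy $f=0$ on $\sigma(A)\cap i\R$ and $\lim_{z\in\C_+,\,|z|\to\infty}f(z)=0$. By the cited result, $A$ has a bounded $\B$-calculus, so the hypotheses of Theorem~\ref{KTB} are met verbatim. Applying that theorem gives $\lim_{t\to\infty}\|T(t)f(A)\|=0$, which is the desired conclusion.

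There is no genuine obstacle here: the corollary is a specialisation of the main theorem, and the only external input is the automatic boundedness of the $\B$-calculus on Hilbert space, which is quoted rather than proved. If one wished to make the argument self-contained, the substantive step would be that quoted fact, whose proof I would sketch via Plancherel-type square-function estimates for bounded semigroups on Hilbert space together with the definition of the Besov norm on $\C_+$; but within the scope of this paper it is legitimate to treat it as known. Thus the proof reduces to a single line: combine \cite[Section~4]{BGT} with Theorem~\ref{KTB}.
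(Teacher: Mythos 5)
Your proposal is correct and matches the paper exactly: the authors also derive Corollary~\ref{KTH} immediately from Theorem~\ref{KTB} by quoting the fact from \cite[Section~4]{BGT} that the negative generator of any bounded $C_0$-semigroup on a Hilbert space admits a bounded $\B$-calculus. No further commentary is needed.
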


This extends the implication (i)$\implies$(iii) of \cite[Theorem~3.1]{Sei15} in the case when $S=\R_+$ to a larger class of functions $f$; see also \cite{Lek09}. 

In the next section, we recall relevant facts for understanding the statement and proof of Theorem \ref{KTB}.   We originally considered only the algebra $\B$, but an anonymous referee suggested that we might be able to prove Theorem \ref{KTB} for algebras other than $\B$ by similar methods.   After some time, we proved a version of the result for some algebras larger than $\B$, and that version is set out in Theorem \ref{KTA}.    The proof is given in Section \ref{proof}.  

The papers \cite{ALM} and \cite{ALMZ} have identified two function algebras which are larger than $\B$, and to which Theorem  \ref{KTA} can be extended in a slightly modified form.   These algebras are briefly described in Section \ref{exs}.

\section{The setting and the main result}\label{sec2}

Let $\B$ be the space of all holomorphic functions $f$ on $\C_+$ such that
\[
\|f\|_{\B_0} := \int_0^\infty \sup_{\b\in\R} |f'(\a+i\b)| \, d\a < \infty.
\]
These functions are bounded and uniformly continuous on $\C_+$, and $\B$ is a Banach algebra in the norm 
\[
\|f\|_\B := \|f\|_{\B_0} + \|f\|_\infty,
\]
where $\|\cdot\|_\infty$ is the supremum norm on $H^\infty(\C_+)$.   We will consider any function $f \in \B$ to be defined and uniformly continuous on $\overline{\C_+}$.    For details about $\B$, see \cite{BGT}.  

We will also consider the following subalgebras of $\B$, adopting the notation used in \cite{BGT}, \cite{BGT2} and \cite{BGTs}:
\begin{align*}
\LL &:= \left\{ \L g: g \in L^1(\R_+) \right\}, \quad \text{where $\L$ is the Laplace transform}, \notag \\
\B_{00} &:=\Big \{f \in \B:  \lim_{|\b|\to\infty} f(i\b) = 0 \Big\} = \Big\{f \in \B:  \lim_{{z\in\C_+},\,{|z|\to\infty}} f(z) = 0 \Big\}.  \label{B00}
\end{align*}
 The algebra $\LL$ is dense in $\B_{00}$ in the $\B$-norm \cite[Theorem 4.4]{BGT2}, and $\B_{00}$ is a closed subalgebra of $\B$, so $\B_{00}$ is the closure of $\LL$ in $\B$.    The norms $\|\cdot\|_{\B_0}$ and $\|\cdot\|_\B$ are equivalent on $\B_{00}$, but $\|\cdot\|_{\B_0}$ is not submultiplicative.   Note that the space $\B_{00}$ is denoted by $\B \cap C_0(\overline{\C_+})$ in \cite{BGT2}.

Let $-A$ be the generator of a bounded $C_0$-semigroup on a Banach space $X$, so that $\sigma(A) \subseteq \overline{\C_+}$.   We say that $A$ {\it satisfies the {\rm (GSF)} condition} if, for all $x \in X$ and $x^* \in X^*$,
\begin{equation} \label{GSF}
\sup_{\a>0} \,\a \int_\R \left| \langle (\a+i\b+A)^{-2}x,x^* \rangle \right| \, d\b < \infty.
\end{equation}
The Closed Graph Theorem then implies that there exists a finite constant $\gamma_A$ such that the supremum above is bounded by $\gamma_A \,\|x\|\,\|x^*\|$ for all $x\in X$ and $x^* \in X^*$.

Let $L(X)$ be the Banach algebra of bounded linear operators on $X$.   For $\l\in\C$, we will let $r_\l$ denote the function $r_\l(z) := (\l+z)^{-1}$ with appropriate domain in $\C$.  Let $\A$ be a Banach algebra such that $\B$ is continuously included in $\A$ and $\A$ is continuously included in $H^\infty(\C_+)$.     We say that $A$ {\it has a bounded $\A$-calculus} if there is a bounded algebra homomorphism $\Phi : \A \to L(X)$ such that $\Phi(r_\l) = (\l + A)^{-1}$ for some (or equivalently, all) $\l\in\C_+$.

It is shown in \cite[Theorem 4.4]{BGT} and \cite[Theorem 6.1]{BGT2} that $A$ satisfies the (GSF) condition if and only if $A$ has a bounded $\B$-calculus.  Moreover, $A$ satisfies the (GSF) condition if $\Phi$ is defined and bounded only on $\B_{00}$, as all the functions $G_{\a,\varphi}$ in the proof of \cite[Theorem 6.1]{BGT2} belong to $\B_{00}$.   In addition, any bounded $\B$-calculus for $A$ is unique \cite[Theorem 6.2]{BGT2}, and we will denote it by $\Phi_\B^A$.

In the case when $\A=\B$, the following result coincides with Theorem~\ref{KTB}, since $\B_{00}$ is the closure of $\LL$ in $\B$.   In general, the closure of $\LL$ in $\A$ is contained in $\{f \in\A: \lim_{z\in\C_+,\, |z|\to\infty} f(z) = 0 \}$, but equality may not hold.

\begin{theorem} \label{KTA}
Let $\A$ be a Banach algebra such that $\B$ is continuously included in $\A$ and $\A$ is continuously included in $H^\infty(\C_+)$.  
Let $-A$ be the generator of a bounded $C_0$-semigroup $(T(t))_{t\ge0}$ and assume that $A$ has a bounded $\A$-calculus.   Let $f$ be in the closure of $\LL$ in $\A$, and assume that $f$ vanishes on $\sigma(A) \cap i\R$.  Then
$\lim_{t\to\infty} \|T(t)f(A)\| = 0$.
\end{theorem}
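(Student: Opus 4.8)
The plan is to adapt the contradiction argument of \cite{Vu92}, substituting the bounded $\A$-calculus --- and the resolvent regularity it entails --- for the spectral synthesis hypothesis of Theorem~\ref{KT}. I begin with two reductions. Since $\B$ is continuously included in $\A$, restricting the bounded $\A$-calculus to $\B$ shows that $A$ has a bounded $\B$-calculus, so by \cite[Theorem~4.4]{BGT} and \cite[Theorem~6.1]{BGT2} the operator $A$ satisfies the (GSF) condition; this makes available both the integral resolvent bound and the results of Cojuhari--Gomilko \cite{CG08} concerning operators that satisfy such a bound. Moreover, since $f$ lies in the closure of $\LL$ in $\A$, I may fix functions $g_n$, each the Laplace transform of some $\psi_n \in L^1(\R_+)$, with $\|g_n - f\|_\A \to 0$; for these one has $g_n(A) = \int_0^\infty \psi_n(s)\,T(s)\,ds$, so the orbit maps $t \mapsto T(t)g_n(A)$ and their Laplace transforms $\lambda \mapsto (\lambda + A)^{-1}g_n(A)$ are amenable to direct computation, and because $\A$ embeds continuously in $H^\infty(\C_+)$ the $g_n$ converge to $f$ uniformly on $\overline{\C_+}$, so in particular $\sup\{|g_n(i\b)| : i\b \in \sigma(A)\cap i\R\} \to 0$.

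Suppose now, for a contradiction, that $\limsup_{t\to\infty}\|T(t)f(A)\| > 0$, so that there exist $\epsilon > 0$, times $t_n \to \infty$ and unit vectors $x_n \in X$ with $\|T(t_n)f(A)x_n\| > \epsilon$. The idea is to pass to the limit isometric $C_0$-group of $(T(t))_{t\ge0}$: one equips $X$ with the seminorm $x \mapsto \limsup_{t\to\infty}\|T(t)x\|$, passes to the associated quotient on which the semigroup acts by isometries, and dilates to a $C_0$-group of isometries $(U(t))_{t\in\R}$ on a Banach space $\tilde X$, carrying along $f(A)$ (which commutes with the semigroup) together with the data $(t_n, x_n)$. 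Some care is needed here to retain enough information about the operator norm of $T(t)f(A)$, and not merely its strong asymptotics, and at this point I would follow the technique used in \cite{Sei15}. The outcome should be a non-zero element of $\tilde X$ whose Arveson spectrum relative to $(U(t))_{t\in\R}$ is contained in $\sigma(A)\cap i\R$ --- it is the passage $t_n \to \infty$ that confines this spectrum to the imaginary axis, while the containment in $\sigma(A)\cap i\R$ reflects a spectral inclusion for the generator of $(U(t))_{t\in\R}$ --- whereas $f$ vanishes on $\sigma(A)\cap i\R$.

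The crux, and the step I expect to be the main obstacle, is to deduce from this that $f$ annihilates the limiting element: this amounts to a Katznelson--Tzafriri-type statement on the group side that requires no spectral synthesis. For a general bounded $C_0$-group the pointwise vanishing of $f$ on the Arveson spectrum would not be enough --- this is precisely why Theorem~\ref{KT} imposes a synthesis hypothesis --- but the (GSF) condition for $A$ transfers to an analogous integral resolvent bound for the generator of $(U(t))_{t\in\R}$, and it is at this point that the analysis of Cojuhari and Gomilko \cite{CG08} is used, to supply the regularity of the Arveson spectral subspaces needed to ensure that a function in $\A$ vanishing on the spectrum of the group acts as the zero operator there. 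Applying this to $f$ contradicts the lower bound $\epsilon$, which proves the theorem; taking $\A = \B$, so that the closure of $\LL$ is $\B_{00}$, then recovers Theorem~\ref{KTB} and with it Corollary~\ref{KTH}.
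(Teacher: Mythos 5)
Your overall skeleton is the same as the paper's (limit isometric group, spectral inclusion $\sigma(Z)\subseteq\sigma(A)\cap i\R$, then a synthesis-free vanishing theorem on the group side via Cojuhari--Gomilko and Arveson spectral theory), but the two places where the real work happens are left as expectations rather than proofs. The crux you yourself flag --- that a function in $\A$ vanishing pointwise on the spectrum of the limit isometric group acts as the zero operator --- is not established by appealing to \cite{CG08} for ``regularity of the Arveson spectral subspaces''. The paper proves it in two steps: on each Arveson spectral subspace $Y_k$ the generator restricts to a \emph{bounded} skew-hermitian operator $Z_k$ with $\sigma(Z_k)\subseteq\sigma(Z)\cap i[-k,k]$; then, combining the representation of the $C_0(i\R)$-calculus from \cite{dLK95} with the integral bound of \cite[Lemma~3.4]{CG08}, one constructs a bounded $C(K)$-calculus for $Z_k$ on a compact set $K$ whose interior contains $\sigma(Z_k)$, proves a spectral mapping theorem for it, and uses a Urysohn-type factorisation to conclude $f(Z_k)=0$ from pointwise vanishing (no synthesis is needed because one is now in $C(K)$); density of $\bigcup_k Y_k$ finishes. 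One also has to check compatibility of the $\A$-, $C_0(i\R)$- and $C(K)$-calculi (the paper first reduces to the case where $\B$ is dense in $\A$ precisely to have uniqueness available). None of this is routine, and it is the mathematical content of the theorem.

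The second gap is the strong-to-norm upgrade, and here your plan would not work as described: from $\|T(t_n)f(A)x_n\|>\epsilon$ with \emph{varying} unit vectors $x_n$ you cannot manufacture a non-zero element of the limit space, because its (semi)norm is $\limsup_{t\to\infty}\|T(t)x\|$ for a \emph{fixed} $x$; deferring to the technique of \cite{Sei15} does not fill this in. The paper's route (following \cite{Vu92,Vu97}) is to apply the strong-convergence result a second time, to the semigroup of left multiplications by $T(t)$ on the space $\mathcal{X}$ of operators $S\in L(X)$ for which $t\mapsto T(t)S$ is norm-continuous (noting $g(A)\in\mathcal{X}$ for $g\in\LL$ and $\sigma(H)\subseteq\sigma(A)$ for the induced generator $H$), yielding $\|T(t)f(A)g(A)\|\to0$ for all $g\in\LL$, and then to remove $g(A)$ using a bounded approximate identity $g_n=\mathcal{L}e_n$ together with $\|fg_n-f\|_{\A}\to0$. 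This last step is exactly where the hypothesis that $f$ lies in the closure of $\LL$ in $\A$ is used; in your write-up that hypothesis is only exploited to get uniform convergence $g_n\to f$, which cannot suffice (for $f\equiv1$, $\sigma(A)\cap i\R=\emptyset$ and a semigroup that is not exponentially stable, the norm conclusion fails), so any correct argument must invoke the $\A$-norm approximation at the norm-upgrade stage, not merely at the level of boundary values.
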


To prove Theorem \ref{KTA}, we will need to consider  the case of a {\it skew-hermitian} operator $Z$ on a Banach space $Y$, so that $-Z$ generates a $C_0$-group of isometries on $Y$.   Then $\sigma(Z) \subseteq i\R$.   If $Z$ has a bounded $\B$-calculus,  then $Z$ also has a bounded $C_0(i\R)$-calculus, that is, a bounded algebra homomorphism from $C_0(i\R)$ to $L(X)$ mapping $r_\l$ to $(\l+Z)^{-1}$ for some (or all) $\l \in \C \setminus i\R$.  The converse also holds.  These statements can be seen in \cite[Theorem 6.5]{BGT2}, or by using a combination of \cite[Theorem 6.1]{BGT2}, \linebreak[2] \cite[Lemma~3.4]{CG08} and \cite[Theorem 3.6]{dLK95}.   The density of the algebra generated by the resolvent functions $\{r_\l : \l \in \C \setminus i\R\}$ in $C_0(i\R)$ shows that any bounded $C_0(i\R)$-calculus for $Z$ is unique, and we denote it by $\Phi_{C_0}^Z$.   If $f \in \B_{00}$, then $\Phi_\B^Z(f)$ and $\Phi_{C_0}^Z(f)$ are both defined (here we use $f$ to denote the boundary function of $f$ on $i\R$).  The proof of \cite[Theorem 6.5]{BGT2} did not show explicitly that the two definitions coincide.   This compatibility can be deduced from the paragraph following the proof of Theorem 6.5 and equation (6.4) in \cite{BGT2}.  They establish a multiplier formula for $\Phi_\B^Z$, namely
\begin{equation} \label{mult}
\Phi_\B^Z(f) \Phi_{C_0}^Z(g) = \lim_{n\to\infty} \Phi_{C_0}^Z(fe_n) \Phi_{C_0}^Z(g)
\end{equation}
in the strong operator topology, where $g \in C_0(i\R)$ and $(e_n)_{n\ge1}$ is a bounded approximate unit in $C_0(i\R)$.   The same formula holds when $\Phi_\B^Z(f)$ is replaced by $\Phi_{C_0}^Z(f)$.   The compatibility of the two calculi follows; see also Remark~\ref{rem1}.   

If $Z$ satisfies the (GSF) condition, then $-Z$ also satisfies the (GSF) condition.   This can be seen from \cite[Lemma 3.4]{CG08}.   It corresponds to the fact that if $Z$ has a bounded $C_0(i\R)$-calculus, then so does $-Z$, with $\Phi_{C_0}^{-Z}(f) = \Phi_{C_0}^Z(\tilde f)$, where $\tilde f(i\b) = f(-i\b)$ for $f \in C_0(i\R)$ and $\b\in\R$.

\section{The proof of Theorem \ref{KTA}}  \label{proof}

The first step in the proof is the observation that it suffices to prove the result under the additional assumption that $\B$ is dense in $\A$.    We make this assumption without any loss of generality, as $\A$ can be replaced by the closure of $\B$ in $\A$.   This assumption implies that all functions in $\A$ are bounded and uniformly continuous on $\overline{\C_+}$ and any bounded $\A$-calculus for an operator $A$ is unique, by the uniqueness of any $\B$-calculus and the density of $\B$ in $\A$.    Throughout this section, we take $\A$ to be a fixed Banach algebra of this form.   It may be advantageous to take $\A=\B$ on first reading of the proof.

Next we consider the case of a skew-hermitian operator $Z$ which has a bounded $\B$-calculus.  Then $Z$ also has a bounded $\A$-calculus $\Phi_\A^Z$, given by the multiplier formula \eqref{mult} with $\B$ replaced by $\A$.   This calculus is an extension of the $\B$-calculus for $Z$, and it is compatible with the $C_0(i\R)$-calculus for $Z$.

Now assume that $Z$ is a bounded skew-hermitian operator with a bounded $\B$-calculus, and let $K$ be a compact subset of $i\R$ such that $\sigma(Z)$ is contained in the interior of $K$ in $i\R$.   The following lemma shows that there is a bounded $C(K)$-calculus for $Z$ with properties similar to those of a single polynomially bounded isometry; see \cite[Lemma 1.1]{KvN97}.   A {\it bounded $C(K)$-calculus} for $Z$ is a bounded algebra homomorphism mapping the constant function 1 to the identity operator $I$ and the identity function 
  to $Z$, or equivalently mapping $r_\l$ to $(\l+Z)^{-1}$ for all $\l \in \C\setminus K$.  Such a calculus is necessarily unique by the density of the polynomials in $C(K)$.

\begin{lemma} \label{lem1}
Let $Z$ be a bounded skew-hermitian operator on a Banach space $Y$, and assume that $Z$ has a bounded $\B$-calculus. Let $\A$ be a Banach algebra as described above, and let $K$ be a compact subset of $i\R$ such that $\sigma(Z)$ is contained in the interior of $K$ in $i\R$.  There is a unique bounded $C(K)$-calculus $\Psi$ for $Z$.   It has the following properties:
 \begin{enumerate}[\rm(a)]
 \item If $f\in C_0(i\R)$ and $g$ is the restriction of $f$ to $K$, then $\Psi(g)=\Phi_{C_0}^Z(f)$.
\item If $f \in \A$ and $g$ is the restriction of $f$ to $K$, then $\Psi(g) = \Phi_\A^Z(f)$.
\item If $g \in C(K)$, then $\sigma(\Psi(g)) = g(\sigma(Z))$.
\item If $g \in C(K)$ and $g$ vanishes on $\sigma(Z)$, then $\Psi(g)= 0$.
\end{enumerate}
\end{lemma}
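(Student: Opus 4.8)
The plan is to construct $\Psi$ as the composition of two maps: a restriction-type map from $C(K)$ into $C_0(i\R)$ (or rather a splitting of functions on $K$ into a function vanishing near $\sigma(Z)$ and a function that can be handled by $\Phi_{C_0}^Z$), followed by the known calculi. Concretely, since $Z$ is bounded and skew-hermitian with $\sigma(Z)$ in the interior of $K$ in $i\R$, I would first fix a function $\chi \in C_0(i\R)$ with $\chi \equiv 1$ on a neighbourhood of $\sigma(Z)$ in $i\R$ and $\operatorname{supp}\chi$ contained in the interior of $K$. For $g \in C(K)$, I would extend $g\chi|_K$ to an element $\widetilde{g} \in C_0(i\R)$ by setting it equal to zero off $K$ (this is continuous because $\chi$ is supported in the interior of $K$), and define $\Psi(g) := \Phi_{C_0}^Z(\widetilde{g})$. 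I then need to check this is independent of the choice of $\chi$, is an algebra homomorphism, sends $1 \mapsto I$ and $z \mapsto Z$, and is bounded; boundedness follows from boundedness of $\Phi_{C_0}^Z$ together with the fact that $g \mapsto \widetilde g$ is bounded from $C(K)$ to $C_0(i\R)$ (with norm controlled by $\|\chi\|_\infty$). That it is multiplicative uses that $\chi^2$ can be taken in place of $\chi$ and that two functions each equal to $1$ near $\sigma(Z)$ agree to high order there — more precisely, $\widetilde{gh} - \widetilde g\, \widetilde h = \widetilde{gh}(1-\chi)$-type error terms lie in the kernel because they vanish on a neighbourhood of $\sigma(Z)$ and $\Phi_{C_0}^Z$ annihilates functions vanishing near $\sigma(Z)$ (as $\sigma(Z)$ is the support of the calculus). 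Uniqueness is immediate from density of polynomials in $C(K)$, as already noted.

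For property (a): if $f \in C_0(i\R)$ and $g = f|_K$, then $\widetilde g = f\chi$ on $i\R$, so $\Psi(g) = \Phi_{C_0}^Z(f\chi)$, and I need $\Phi_{C_0}^Z(f\chi) = \Phi_{C_0}^Z(f)$. This follows because $f(1-\chi)$ vanishes on a neighbourhood of $\sigma(Z)$, hence is annihilated by $\Phi_{C_0}^Z$; this annihilation property of $\Phi_{C_0}^Z$ for functions vanishing near $\sigma(Z)$ is the standard support property of a continuous functional calculus for a bounded operator with spectrum $\sigma(Z)$, and can be proved by writing such a function as a uniform limit of functions of the form $\sum c_j r_{\lambda_j}$ with $\lambda_j$ near $\sigma(Z)$... actually more carefully: a function in $C_0(i\R)$ vanishing on a neighbourhood $U$ of $\sigma(Z)$ can be uniformly approximated by functions in the algebra generated by $\{r_\lambda : \lambda \notin i\R\}$ that still vanish on a (slightly smaller) neighbourhood, and then one uses that $\Phi_{C_0}^Z(r_\lambda) = (\lambda+Z)^{-1}$ stays bounded — but the cleanest route is: the restriction of $\Phi_{C_0}^Z$ to functions supported off $\sigma(Z)$ factors through restriction to $i\R \setminus \sigma(Z)$, on which $Z$ has empty spectrum, forcing the image to be zero. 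Property (b) is the analogue for $\A$: given $f \in \A$, one has $\Phi_\A^Z(f) = \Phi_\A^Z(f\chi) + \Phi_\A^Z(f(1-\chi))$, and since $\Phi_\A^Z$ is compatible with $\Phi_{C_0}^Z$ and $f(1-\chi)$ has boundary function vanishing near $\sigma(Z)$, the second term vanishes by the same support argument applied via the $C_0(i\R)$-calculus; for the first term I use compatibility of $\Phi_\A^Z$ with $\Phi_{C_0}^Z$ (established in the paragraph preceding the lemma) together with part (a) applied to $f\chi \in C_0(i\R)$, noting $(f\chi)|_K$ has the same $\widetilde{\cdot}$ as needed. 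Here one has to be slightly careful that multiplying an $\A$-function by the $C_0$-function $\chi$ makes sense inside the relevant calculus — this is handled by the multiplier formula \eqref{mult}.

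Properties (c) and (d) are essentially spectral mapping statements. For (c), since $\Psi$ is a bounded unital algebra homomorphism with $\Psi(z) = Z$, the inclusion $g(\sigma(Z)) \subseteq \sigma(\Psi(g))$ follows from the fact that $\Psi(g - \mu)$ is not invertible when $\mu \in g(\sigma(Z))$: if $g(i\beta_0) = \mu$ with $i\beta_0 \in \sigma(Z)$, then $g - \mu$ vanishes at $i\beta_0 \in \sigma(Z)$, and one shows $\Psi(g-\mu)$ cannot be invertible, e.g.\ by approximating and using that $\Psi$ maps $r_\lambda$ to $(\lambda+Z)^{-1}$ whose spectra are controlled; the reverse inclusion $\sigma(\Psi(g)) \subseteq g(\sigma(Z))$ uses that for $\mu \notin g(\sigma(Z))$ the function $(g-\mu)^{-1}$ is a well-defined element of $C(K')$ for a slightly smaller compact neighbourhood $K'$ of $\sigma(Z)$, and composing with $\Psi$ (suitably interpreted, again via the splitting with $\chi$ supported in the interior of $K'$) produces an inverse of $\Psi(g) - \mu$. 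Then (d) is the special case: if $g$ vanishes on $\sigma(Z)$, then $g\chi$ can be taken with $g\chi$ supported away from... no — rather, $g$ vanishing on $\sigma(Z)$ means $\widetilde g = g\chi$ is an element of $C_0(i\R)$ vanishing on $\sigma(Z)$; approximating $g$ on $K$ by functions vanishing on a \emph{neighbourhood} of $\sigma(Z)$ (possible by continuity and Tietze/Urysohn since $g|_{\sigma(Z)} = 0$), their images under $\Psi$ are $0$ by the support property, and $\Psi$ is continuous, so $\Psi(g) = 0$. Alternatively (d) follows from (c) since $g(\sigma(Z)) = \{0\}$ forces $\sigma(\Psi(g)) = \{0\}$, but that only gives quasinilpotence, not nullity, so the approximation argument is the right one. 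The main obstacle I anticipate is item (d) — equivalently, pinning down and cleanly invoking the support/localisation property of $\Phi_{C_0}^Z$ (that it annihilates functions vanishing near $\sigma(Z)$, and that functions merely vanishing \emph{on} $\sigma(Z)$ are limits of such) and making sure all the multiplications by the cutoff $\chi$ are legitimate inside the $\A$-calculus via the multiplier formula \eqref{mult}; everything else is bookkeeping with the definition $\Psi(g) = \Phi_{C_0}^Z(g\chi)$.
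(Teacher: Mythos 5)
Your construction $\Psi(g):=\Phi_{C_0}^Z(\widetilde g)$ with $\widetilde g = g\chi$ rests entirely on one claim: that $\Phi_{C_0}^Z$ annihilates every function in $C_0(i\R)$ vanishing on a neighbourhood of $\sigma(Z)$. Without it, your $\Psi$ is not well defined (independence of $\chi$), not multiplicative, does not send $1\mapsto I$ (you get $\Phi_{C_0}^Z(\chi)$, not $I$), and does not send $r_\l|_K$ to $(\l+Z)^{-1}$; it is also what drives your proofs of (a), (b) and (d). You correctly identify this as the main obstacle, but the justification you offer --- that the calculus ``factors through restriction to $i\R\setminus\sigma(Z)$, on which $Z$ has empty spectrum, forcing the image to be zero'' --- is not an argument: there is no such restriction homomorphism, and showing that the hull of $\ker\Phi_{C_0}^Z$ is contained in $\sigma(Z)$ is precisely the point at issue. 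The soft attempts (choose $h$ with $h\equiv 1$ near $\sigma(Z)$ and $gh=0$, then use $\Phi(g)\Phi(h)=0$) stall because one needs $\Phi_{C_0}^Z(h)$ to be invertible, which requires a spectral mapping statement such as (c) --- and your route to (c) in turn presupposes the localisation property. This circularity is a genuine gap, not a bookkeeping issue.

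The paper breaks the circle with a quantitative input that your proposal omits: using the integral representation of the $C_0(i\R)$-calculus from deLaubenfels--Kantorovitz together with the Cojuhari--Gomilko bound on $\int_\R|\langle\Delta(\a,\b,-Z)y,y^*\rangle|\,d\b$, and a dominated-convergence argument showing $\int_{\R\setminus K'}\|\Delta(\a,\b,-Z)\|\,d\b\to0$ as $\a\to0+$, one obtains the estimate $\|\Phi_{C_0}^Z(f)\|\le \tfrac{C}{2\pi}\|f\|_{C(K)}$ (equation \eqref{CKest}). This shows directly that $\Phi_{C_0}^Z(f)$ depends only on $f|_K$, which makes $\Psi(g):=\Phi_{C_0}^Z(f)$ (for any extension $f$ of $g$) well defined and bounded, gives (a) for free, and supplies the continuity needed for the approximation step in (d); it also subsumes the support property you need. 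With the $C(K)$-calculus in hand, (c) follows from a clean Gelfand/character argument using density of polynomials in $C(K)$ (your sketch of (c) is also too vague on the inclusion $g(\sigma(Z))\subseteq\sigma(\Psi(g))$), and (d) then follows by choosing $h$ with $h\equiv1$ on $\sigma(Z)$, $gh=0$, and invoking (c) to invert $\Psi(h)$. To repair your proof you would have to establish the localisation property of $\Phi_{C_0}^Z$ by essentially the same resolvent estimates, so nothing is saved by the cutoff device.
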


\begin{proof}
Since $Z$ has a bounded $\B$-calculus, the (GSF) condition holds for $Z$ and for $-Z$, so,  for all $\a>0$, $y \in Y$ and $y^* \in Y^*$,
\[
\a \int_\R \left| \langle (\a+i\b-Z)^{-2}y,y^* \rangle \right| \,d\b \le \gamma_{-Z}\, \|y\|\,\|y^*\|.
\]

For $\a \in (0,1]$ and $\b\in\R$, let
\begin{align*}
\Delta(\a,\b,-Z) &:= (\a+i\b-Z)^{-1} - (-\a+i\b-Z)^{-1} \\
&= - 2\a (\a+i\b-Z)^{-1} (-\a+i\b-Z)^{-1}.
\end{align*}
Fix $k > \|Z\|$, and let $K' = \{\b\in\R: i\b\in K\}$.   Then
\begin{align*}
\|\Delta(\a,\b,-Z)\| &\le \frac{2\a}{(|\b|-\|Z\|)^2}, \quad &&\alpha\in(0,1],\ |\b|>k, \\
\|\Delta(\a,\b,-Z)\| &\le c, \qquad &&\a\in(0,1], \ |\b|\le k, \; \b \in \R \setminus K',\\
\lim_{\a\to0+} \|\Delta(\a,\b,-Z)\| &= 0, \qquad &&i\b \notin \sigma(Z),
\end{align*}
for some constant $c$.   The Dominated Convergence Theorem and these estimates imply that 
\begin{equation} \label{R-K}
\lim_{\a\to0+} \int_{\R \setminus K'} \|\Delta(\a,\b,-Z)\| \,d\b = 0.
\end{equation}

Let $f \in C_0(i\R)$.  It follows from \cite[Theorem 3.6]{dLK95} and \eqref{R-K} that the $C_0(i\R)$-calculus for $Z$ is given by
\begin{align*}
\langle \Phi_{C_0}^Z(f) y,y^* \rangle &= \frac{1}{2\pi} \lim_{\a\to0+} \int_\R f(i\b) \langle \Delta(\a,\b,-Z)y,y^* \rangle \, d\b \\
&= \frac{1}{2\pi} \lim_{\a\to0+} \int_{K'} f(i\b) \langle \Delta(\a,\b,-Z)y,y^* \rangle \, d\b.
\end{align*}
 From \cite[Lemma 3.4]{CG08} applied to $-iZ$, there is a constant $C$ such that 
\begin{equation*} \label{cj3.6}
\int_\R \left| \langle \Delta(\a,\b,-Z)y,y^* \rangle \right| \, d\b \le C\,\|y\|\,\|y^*\|, \qquad \alpha\in(0,1],\ y \in Y,\ y^*\in Y^*.
\end{equation*}
   Then
\begin{align*}
\left| \langle \Phi_{C_0}^Z(f)y,y^* \rangle \right|  &\le \frac{1}{2\pi} \int_{K'} \|f\|_{C(K)} \left| \langle \Delta(\a,\b,-Z)y,y^* \rangle \right|\,d\b \\
& \le \frac{C}{2\pi} \,\|f\|_{C(K)}\, \|y\|\,\|y^*\|.
\end{align*}
This implies that
\begin{equation} \label{CKest}
\|\Phi_{C_0}^Z(f)\| \le \frac{C}{2\pi} \|f\|_{C(K)}.
\end{equation}

We can now define a bounded $C(K)$-calculus for $Z$, as follows.   Let $g \in C(K)$, and let $f \in C_0(i\R)$  be any extension of $g$.   Define $\Psi(g) := \Phi_{C_0}^Z(f)$.   It follows from \eqref{CKest} that this definition is independent of the choice of $f$, and $\Psi$ is a bounded algebra homomorphism from $C(K)$ to $L(Y)$ mapping $r_\l$ to $(\l+Z)^{-1}$ for $\l\in \C\setminus K$.  Thus it is a bounded $C(K)$-calculus for $Z$.    Property (a) in Lemma \ref{lem1} is immediate from the definition of $\Psi$.

The map from $\A$ to $L(Y)$ given by  $f \mapsto \Psi(f|_K)$ is a bounded algebra homomorphism, and it is a bounded $\A$-calculus for $Z$.  Now (b) follows from the uniqueness of the $\A$-calculus for $Z$; see the first paragraph in Section~\ref{proof}.

\def\Ca{\mathcal{C}}

The proofs of (c) and (d)  are very similar to the proofs of \cite[Lemma~1.1]{KvN97}.   For (c), we may take any commutative Banach subalgebra $\Ca$ of $L(Y)$ containing $I$, $Z$, $\Psi(g)$ and all their resolvents.  Then the spectra of $Z$ and $\Psi(g)$ in $L(Y)$ coincide with their spectra in $\Ca$, so it suffices to show that $\chi(\Psi(g)) = g(\chi(Z))$  for every character $\chi$ of $\Ca$.   We may approximate $g$  uniformly on $K$ by a sequence $(p_n)_{n\ge1}$ of polynomials, so $\Psi(g) = \lim_{n\to\infty} p_n(Z)$ in operator norm.   Then
\[
\chi(\Psi(g)) = \lim_{n\to\infty} \chi(p_n(Z)) = \lim_{n\to\infty} p_n(\chi(Z))  = g(\chi(Z)).
\]
This establishes (c).

For (d), first assume that  $g \in C(K)$ vanishes on a neighbourhood of $\sigma(Z)$ in $K$.  Choose $h \in C(K)$ such that  $h(\l)=1$ for all $\l\in\sigma(Z)$ and $gh=0$.    By (c), $\sigma(\Psi(h)) = \{1\}$, so $\Psi(h)$ is invertible.    Since $\Psi(g)\Psi(h)=0$ it follows that $\Psi(g) = 0$.  

Now assume only that $g \in C(K)$ vanishes on $\sigma(Z)$.  Then there is a sequence $(g_n)_{n\ge1}$ in $C(K)$ where each $g_n$ vanishes on a neighbourhood $U_n$ of $\sigma(Z)$ in $K$, and $\|g_n-g\|_{C(K)}\to0$.  Statement (d) follows from \eqref{CKest} and the paragraph above.
\end{proof}

Properties (a) and (b) in Lemma \ref{lem1} establish that the functional calculus $\Psi$ for a bounded skew-hermitian operator $Z$ is compatible with the $C_0(i\R)$-calculus and the $\A$-calculus for functions $f \in \A$.  We will use the simple notation $f(Z)$ for the resulting operators in any of these calculi.

We now return to the case when $Z$ is an unbounded skew-hermitian operator with a bounded $\B$-calculus.

\begin{lemma}  \label{lem2}
Let $Z$ be a skew-hermitian operator on a Banach space $Y$, and assume that $Z$ has a bounded $\B$-calculus.  Let $f \in \A$, 
and assume that $f$ vanishes on $\sigma(Z)$.  Then $f(Z) = 0$. 
\end{lemma}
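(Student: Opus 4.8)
The plan is to reduce the unbounded case to the bounded case treated in Lemma~\ref{lem1}(d) by an approximation argument in the spirit of Arveson's spectral subspace theory. The underlying idea is that, although $\sigma(Z)\subseteq i\R$ may be unbounded, the function $f\in\A\subseteq H^\infty(\C_+)$ is bounded and uniformly continuous on $\overline{\C_+}$ (since we have reduced to the case where $\B$ is dense in $\A$), and we want to cut $Z$ down to a bounded skew-hermitian operator whose spectrum is still contained in the zero set of the boundary function of $f$, apply Lemma~\ref{lem1}, and then pass to the limit.

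First I would use the $C_0$-group of isometries generated by $-Z$ to build Arveson-type spectral projections: for a suitable family of smooth, compactly supported ``bump'' functions $\psi$ on $\R$ whose Fourier transforms have compact support, the operators $\psi(Z)$ (defined via the $C_0(i\R)$-calculus, or equivalently by integrating the group against $\hat\psi$) restrict $Z$ to invariant subspaces $Y_\psi$ on which $Z$ acts as a bounded operator with $\sigma(Z|_{Y_\psi})\subseteq\operatorname{supp}\psi\cap i\R$. On each such $Y_\psi$, the restriction $Z|_{Y_\psi}$ is a bounded skew-hermitian operator which inherits a bounded $\B$-calculus from $Z$ (the (GSF) condition passes to the restriction, or one checks the resolvents directly), so Lemma~\ref{lem1} applies with any compact $K\subseteq i\R$ whose interior contains $\sigma(Z|_{Y_\psi})$. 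Choosing the bumps so that $\operatorname{supp}\psi$ shrinks around points of $\sigma(Z)$, and using that $f$ vanishes on $\sigma(Z)$ together with the uniform continuity of $f$, one arranges that $f$ is uniformly small on $K$; then Lemma~\ref{lem1}(b) identifies $f(Z)|_{Y_\psi}=f(Z|_{Y_\psi})=\Psi(f|_K)$, whose norm is controlled by $\|f\|_{C(K)}$ via \eqref{CKest}, hence is small. (Where $f$ does not vanish exactly on a neighbourhood, one first approximates $f$ on compact subsets of $i\R$ by functions vanishing near $\sigma(Z)$, exactly as in the last paragraph of the proof of Lemma~\ref{lem1}.)

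The second ingredient is the reconstruction of $f(Z)x$ from the pieces $f(Z)\psi(Z)x$. Using a bounded approximate identity $(e_n)$ for $C_0(i\R)$ whose members have the bump form above, the multiplier formula \eqref{mult} gives $f(Z)x=\lim_n f(Z)e_n(Z)x$ in the strong operator topology for each $x\in Y$; since each $e_n(Z)x$ lies in a subspace on which the previous paragraph forces $f$ to act with small norm, one concludes $f(Z)x=0$, and hence $f(Z)=0$. The delicate point is making the two limiting procedures cooperate: the spectral bumps must simultaneously have uniformly bounded $C_0(i\R)$-norm (to control $\|f(Z)e_n(Z)\|$ through the ambient calculus bound), have supports localizing onto $\sigma(Z)$, and form an approximate identity.

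I expect the main obstacle to be precisely this coordination, i.e. showing that the local smallness of $f$ near $\sigma(Z)$, combined with boundedness of $f$ away from $\sigma(Z)$ on $i\R$ and the decay $f(i\b)\to0$ as $|\b|\to\infty$ (which holds because $f$ is in the closure of $\LL$), yields a genuine norm bound $\|f(Z)e_n(Z)\|\to0$ rather than merely pointwise decay. Concretely, one needs an estimate of the form $\|f(Z)e_n(Z)\|\le c\,\|f\cdot e_n\|_{C_0(i\R)}$ or $\le c\,\|f|_{K_n}\|_{C(K_n)}$ on the relevant compact sets $K_n$, uniform in $n$, and then the fact that $f$ is small on a neighbourhood of $\sigma(Z)$ while $e_n$ is concentrated there (and negligible where $f$ is not small) forces the right-hand side to zero. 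This is where the Cojuhari--Gomilko resolvent bound from \cite{CG08} (as already exploited via \eqref{CKest} in Lemma~\ref{lem1}) does the essential work, giving operator-norm control from sup-norm control on compacta; the remaining analysis is a careful but routine partition-of-unity argument splitting $i\R$ into a shrinking neighbourhood of $\sigma(Z)$, a bounded ``far'' region where $|f|$ is bounded but $e_n$ is small, and the neighbourhood of $\infty$ where $f$ itself is small.
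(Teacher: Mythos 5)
Your overall strategy --- cutting $Z$ down to bounded skew-hermitian pieces via Arveson's spectral subspaces for the $C_0$-group generated by $-Z$, applying Lemma \ref{lem1} on each piece, and recovering $f(Z)$ by a limiting argument --- is exactly the route the paper takes. But your execution has a genuine gap, and it stems from missing one key fact: if $Y_k$ is the spectral subspace for the interval $[-k,k]$ and $Z_k$ is the (bounded) restriction of $Z$ to $Y_k$, then $\sigma(Z_k)\subseteq\sigma(Z)\cap i[-k,k]$ (see \cite[Theorems 8.19 and 8.27]{Dav80}), not merely $\sigma(Z_k)\subseteq i[-k,k]$. With this inclusion, $f$ vanishes \emph{exactly} on $\sigma(Z_k)$, so Lemma \ref{lem1}(d) --- which only requires the function to vanish on the spectrum, not on a neighbourhood of it --- gives $f(Z_k)=0$ on the nose, with $K=i[-(k+1),k+1]$. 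No smallness of $f$ near $\sigma(Z)$, no shrinking of supports, and no uniform continuity is needed. The proof then finishes in one line: $f(Z_k)$ is the restriction of $f(Z)$ to $Y_k$, and $\bigcup_{k\in\N}Y_k$ is dense in $Y$ \cite[Lemma 8.12]{Dav80}, so $f(Z)=0$.

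Because you only record $\sigma(Z|_{Y_\psi})\subseteq\operatorname{supp}\psi\cap i\R$, you are forced into the two-parameter limiting procedure you describe --- supports shrinking onto $\sigma(Z)$ on one hand, an approximate identity $(e_n)$ on the other --- and the ``delicate point'' you flag is a real obstruction to your version rather than a routine one: a bounded approximate identity for $C_0(i\R)$ must converge to $1$ uniformly on compact sets, so its members cannot be ``concentrated near $\sigma(Z)$ and negligible where $f$ is not small''; and the bound \eqref{CKest} is established only for bounded operators, with $K$ a compact set containing the spectrum in its interior, so it does not yield $\|f(Z)e_n(Z)\|\le c\,\|fe_n\|_{C(K_n)}\to0$ in the way you hope when $\sigma(Z)$ is unbounded. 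You also invoke the decay $f(i\b)\to0$ as $|\b|\to\infty$, which is not a hypothesis of Lemma \ref{lem2} ($f$ is an arbitrary element of $\A$ there; membership in the closure of $\LL$ only enters in Theorem \ref{KTA}). A minor further slip: a nonzero function cannot be compactly supported and have compactly supported Fourier transform; what is wanted is $\psi\in L^1(\R)$ with $\hat\psi$ of compact support, so that $\int_\R\psi(t)V(t)y\,dt$ lies in a spectral subspace. All of these difficulties evaporate once the spectral inclusion $\sigma(Z_k)\subseteq\sigma(Z)\cap i[-k,k]$ and the density of $\bigcup_k Y_k$ are in hand.
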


\begin{proof}
We will apply the Arveson spectral theory for $C_0$-groups of isometries to the $C_0$-group $(V(t))_{t\in\R}$ generated by $-Z$ on $Y$; see \cite[Section 8]{Dav80} for an account of the theory in this context. 

For $k\in\N$, let $Y_k$ be the spectral subspace corresponding to the interval $[-k,k]$.   Then $Y_k$ is a closed subspace of $Y$ which is invariant under the operators $V(t)$, and the restrictions $V_k(t)$ of $V(t)$ to the subspace $Y_k$ form a norm-continuous group of isometries on $Y_k$.  Moreover, the negative generator $Z_k$ is a bounded operator on $Y_k$, so $Y_k$ is contained in the domain of $Z$, and $Zy =Z_k y$ for all $y \in Y_k$, and $\sigma(Z_k) \subseteq \sigma(Z) \cap i[-k,k]$; see \cite[Theorems 8.19 and 8.27]{Dav80}.   By restricting the operators $f(Z)$ to the subspace $Y_k$ for $f \in C_0(i\R)$, we obtain a bounded $C_0(i\R)$-calculus for $Z_k$.

Now let $f \in \A$, 
and assume that $f$ vanishes  on $\sigma(Z)$.   For each $k$, $f$ vanishes on $\sigma(Z_k)$.   Taking $K = i[-(k+1),k+1]$, Lemma \ref{lem1} shows that $f(Z_k) = 0$.    This implies that $f(Z)y = 0$ for all $y \in Y_k$, since $f(Z_k)$ is the restriction of $f(Z)$ to $Y_k$.   Since $\bigcup_{k\in\N} Y_k$ is dense in $Y$ (see \cite[Lemma 8.12]{Dav80}), this implies that $f(Z)=0$.
\end{proof}

\begin{remark} \label{rem1}
Let $Z$ be a skew-hermitian operator on $Y$, with a bounded $\B$-calculus, and let $Y_k$ and $Z_k$ be as in the proof of Lemma \ref{lem2}.   Let $f \in \A \cap C_0(i\R)$.   Parts (a) and (b) of Lemma \ref{lem1} show that $\Phi_\A^{Z_k}(f) = \Phi_{C_0}^{Z_k}(f)$.   Hence $\Phi_\A^Z(f)$ and $\Phi_{C_0}^Z(f)$ coincide on $\bigcup_{k\in\N} Y_k$, and then by continuity they coincide on $Y$.   This is an alternative proof that the two calculi are compatible.
\end{remark}

We now prove Theorem \ref{KTA}.   The structure of our argument is the same as that of \cite{Vu92}, but we need the lemmas above to justify the crucial stage in the argument which enables us to cover functions outside $\LL$ and to avoid assumptions of spectral synthesis.

\begin{proof} [Proof of Theorem \ref{KTA}]
If $i\R \subseteq \sigma(A)$, $f\in\A$, and $f$ vanishes on $i\R$, then $f$ vanishes on $\C_+$.  So $f(A) = 0$, and the result holds trivially.   Thus we may assume that $i\R$ is not contained in $\sigma(A)$.   We will initially show that $\lim_{t\to\infty} T(t)f(A) = 0$ in the strong operator topology.

We use the limit isometric semigroup, as in \cite[Theorem 3.2]{Vu92} or  \cite[Proposition 3.1]{Vu97}.   There is a Banach space $Y$ (which may be $\{0\}$), a bounded map $\pi : X \to Y$ with dense range such that $\|\pi(x)\|_Y =\limsup_{t\to\infty} \|T(t)x\|$ for $x \in X$, and a $C_0$-semigroup  $(V(t))_{t\ge0}$ of (not necessarily invertible) isometries on $Y$ such that $V(t) \pi = \pi T(t)$ and the negative generator $Z$ of $(V(t))_{t\ge0}$ satisfies $\sigma(Z) \subseteq \sigma(A) \cap i\R$.    

By a property of semigroups of isometries (see \cite[Lemma, p.~38]{LV88} or \cite[pp.~419, 420]{GS78}), the inequality $\|Zy - \l y\| \ge \Re\l \|y\|$ is valid for all $\l\in\C_+$ and $y \in Y$.  Since $i\R$ is not contained in $\sigma(Z)$ and $Z$ has no approximate eigenvalues in $\C_+$, it follows that  $\sigma(Z) \cap \C_+$ is empty, and \linebreak[4] $\|(\l-Z)^{-1}\| \le (\Re\l)^{-1}$ for $\l\in\C_+$.  By the Hille-Yosida theorem, $Z$ generates a $C_0$-semigroup, and hence $-Z$ generates a $C_0$-group of invertible isometries $(V(t))_{t\in\R}$ on $Y$.  

For $g \in \B$, the operator $g(A) \in L(X)$ commutes with $T(t)$ for all $t\ge0$, and so there is a unique operator $\Upsilon(f) \in L(Y)$, such that $\Upsilon(f)\pi = \pi f(A)$.  Then $\Upsilon$ is a bounded algebra homomorphism of $\B$ into $L(Y)$ mapping $r_\l$ to $(\l+Z)^{-1}$ for $\l\in\C_+$, so it is a bounded $\B$-calculus for $Z$.  

Now assume that $f$ is in the closure of $\LL$ in $\A$ and $f$ vanishes on $\sigma(A) \cap i\R$.   Then $f$ vanishes on $\sigma(Z)$, and 
it follows from Lemma \ref{lem2} that $\Upsilon(f)=0$.   Then  $\pi f(A) = 0$, and hence $\lim_{t\to\infty} T(t)f(A) = 0$ in the strong operator topology.

It remains to lift this conclusion to the operator-norm topology.   This can be achieved by using the method of \cite[Theorem 3.2]{Vu92} and \cite[Theorem~3.9]{Vu97}.  Consider the induced bounded $C_0$-semigroup of left multiplications by $T(t)$ on the Banach space $\mathcal{X}$ of operators $S \in L(X)$ such that $t \mapsto T(t)S$ is norm-continuous on $\R_+$.   Let $H$ be the negative generator  of this $C_0$-semigroup on $\mathcal{X}$.  It is easily seen  that $\sigma(H) \subseteq \sigma(A)$, and that the operators of left multiplication by $f(A)$, for $f \in \A$, form an $\A$-calculus for $H$.  It is well known and elementary that $g(A) \in \mathcal{X}$ for all $g \in \LL$.     Applying the result established in the paragraph above to the $C_0$-semigroup on $\mathcal{X}$ shows that $\lim_{t\to\infty} \|T(t)f(A)g(A)\| = 0$ for all $g\in\LL$.

Let $(e_n)_{n\ge1}$ be a bounded approximate unit for $L^1(\R_+)$, and let $g_n = \mathcal{L}e_n$ for $n\ge1$.  Then $(g_n)_{n\ge1}$ is a bounded approximate unit for $\mathcal{L}L^1$ and hence for the closure of $\LL$ in $\A$; see Section~\ref{sec2}.    Thus $\lim_{n\to\infty} \|fg_n - f\|_\B = 0$,  and so $\lim_{n\to\infty} \|f(A)g_n(A)- f(A)\|=0$.  Since the semigroup $(T(t))_{t\ge0}$ is bounded and $\lim_{t\to\infty} \|T(t)f(A)g_n(A)\|=0$ for all $n\ge1$, it follows that $\lim_{t\to\infty} \|T(t)f(A)\|=0$. 
\end{proof}

\begin{rems}
(a) The above proof, without the final paragraph,  shows that  $\lim_{t\to\infty} T(t)f(A) = 0$ in the strong operator topology and $$\lim_{t\to\infty} \|T(t)f(A)g(A)\|=0$$ for all $g \in \LL$ even without the assumption that $f$ vanishes at infinity.

\smallskip
\noindent (b)  If  $\sigma(A) \cap i\R$ is bounded then the application of Lemma \ref{lem2} in the proof of Theorem \ref{KTA} may be replaced by an application of Lemma~\ref{lem1}.
\smallskip

\noindent (c) In Theorem \ref{KTB} where $\A=\B$ and $f\in\B$, the condition that $f$ vanishes on $\sigma(A)\cap i\R$ is necessary for the conclusion. Indeed, let $-A$ be the generator of a bounded $C_0$-semigroup $(T(t))_{t\ge0}$ and assume that $A$ has a bounded $\B$-calculus.   Let $f \in \B$, and suppose that  $\lim_{t\to\infty} \|T(t)f(A)\| = 0$.   Let $z\in \sigma(A)\cap i\R$. Then $e^{ tz}f(z)\in\sigma(T(t)f(A))$ for all $t\ge0$ by the spectral inclusion theorem for the $\B$-calculus \cite[Theorem~4.17]{BGT}, and hence $|f(z)|\le \|T(t)f(A)\|\to0$ as $t\to\infty$. It follows that $f(z)=0$, as required.
\smallskip

\noindent (d)   In Theorem \ref{KTA}, we have assumed that $\A$ contains $\B$, and so contains the constant functions.    It would suffice to assume that $\A$ contains the subalgebra $\B_0$ defined by
\[
\B_0 := \left\{f \in \B:  \lim_{\Re z \to \infty} f(z) = 0 \right\},
\]
as one may pass to the algebra obtained by adjoining the constant functions.  
\end{rems}

\section{Examples}  \label{exs}

Here we briefly describe two Banach algebras introduced by Arnold and Le Merdy, to which Theorem \ref{KTA} can be applied.  
In \cite{ALM} they introduce a Banach algebra $\A(\C_+)$ which is continuously included in $H^\infty(\C_+)$, they identify a Banach subalgebra $\A_0(\C_+)$ and they show that it is the closure of the algebra $\LL$ in $\A(\C_+)$ \cite[Lemma 3.14]{ALM}.   They also show that $\B_0$ is properly and continuously included in $\A(\C_+)$ \cite[Proposition 5.2, Theorem 5.3]{ALM}.    (Readers should be aware that our spaces $\B_0$ and $\B_{00}$ are denoted  by $\B(\C_+)$ and $\B_0(\C_+)$, respectively, in \cite{ALM}.)   In finding the Banach algebra $\A(\C_+)$ with these properties, the authors were guided by Peller's work in the discrete case \cite{Pel82}.   It is not easy to identify specific functions which are in $\A(\C_+)$ but not in $\B$.

In a further paper with an additional author \cite{ALMZ}, the authors introduce a Banach algebra $\A_S(\C_+)$ such that
\[
\A(\C_+) \subsetneq \A_{S}(\C_+) \subsetneq H^\infty(\C_+),
\]
with continuous inclusions.   The closure of $\LL$ in $\A_{S}(\C_+)$ is an identified subalgebra $\A_{0,S}(\C_+)$.    Again the definitions are complex, and finding explicit functions is not easy. 

Let $-A$ be the generator of a bounded $C_0$-semigroup on a Hilbert space.   The authors show in \cite{ALM} that $A$ has a (unique) bounded $\A(\C_+)$-calculus, and in \cite{ALMZ} that $A$ has a bounded $\A_S(\C_+)$-calculus.   Hence Corollary \ref{KTH} can be extended to the following.

\begin{corollary}
Let $-A$ be the generator of a bounded $C_0$-semigroup $(T(t))_{t\ge0}$ on a Hilbert space. Let $f \in \A_{0,S}(\C_+)$, and assume that $f$ vanishes on $\sigma(A) \cap i\R$.  Then $\lim_{t\to\infty} \|T(t)f(A)\| = 0$.
\end{corollary}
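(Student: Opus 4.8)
The plan is to read this corollary off from Theorem~\ref{KTA} applied with $\A = \A_S(\C_+)$, once all the hypotheses of that theorem have been checked. First I would verify the structural requirements on the algebra: by \cite{ALMZ}, $\A_S(\C_+)$ is a Banach algebra with continuous inclusions $\A(\C_+) \subsetneq \A_S(\C_+) \subsetneq H^\infty(\C_+)$, and by \cite[Proposition~5.2]{ALM} the algebra $\B_0$ is continuously included in $\A(\C_+)$, hence in $\A_S(\C_+)$. Theorem~\ref{KTA} is phrased with the inclusion $\B \hookrightarrow \A$, but, as noted in Remark~(d) following its proof, it is enough to have $\B_0 \hookrightarrow \A$. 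If $\A_S(\C_+)$ contains the constant functions this upgrades to $\B \hookrightarrow \A_S(\C_+)$ at once, since $f - f(\infty) \in \B_0$ for each $f \in \B$, where $f(\infty) := \lim_{\Re z \to \infty} f(z)$; and if not, one passes to $\A_S(\C_+)$ with a unit adjoined, extending the $\A_S(\C_+)$-calculus of $A$ to the unitization in the obvious way and noting that the closure of $\LL$ is unchanged by this.

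Next I would bring in the functional-calculus input: since $-A$ generates a bounded $C_0$-semigroup on a Hilbert space, $A$ has a bounded $\A_S(\C_+)$-calculus by \cite{ALMZ}, and $f(A)$ denotes the image of $f$ under it. The hypothesis $f \in \A_{0,S}(\C_+)$ means exactly that $f$ lies in the closure of $\LL$ in $\A_S(\C_+)$, since $\A_{0,S}(\C_+)$ was identified in \cite{ALMZ} as precisely that closure; and $f$ vanishes on $\sigma(A) \cap i\R$ by assumption. With all of this in hand, Theorem~\ref{KTA} (for $\A_S(\C_+)$, or for its unitization as above) gives $\lim_{t\to\infty} \|T(t)f(A)\| = 0$.

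I do not expect a genuine obstacle here: the result is a formal consequence of Theorem~\ref{KTA} together with the facts about $\A_S(\C_+)$ and its functional calculus established in \cite{ALM} and \cite{ALMZ}. The only point needing a moment's care is the possible absence of the constant functions from $\A_S(\C_+)$ --- exactly the situation that Remark~(d) after Theorem~\ref{KTA} is designed to handle --- while the compatibility of the $\A_S(\C_+)$-calculus with the $\B$-calculus used inside the proof of Theorem~\ref{KTA} is already built into that theorem.
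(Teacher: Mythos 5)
Your proposal is correct and follows essentially the same route as the paper: the corollary is obtained by applying Theorem~\ref{KTA} with $\A=\A_S(\C_+)$, using the continuous inclusions $\B_0\subseteq\A(\C_+)\subseteq\A_S(\C_+)\subseteq H^\infty(\C_+)$, the identification of $\A_{0,S}(\C_+)$ as the closure of $\LL$ in $\A_S(\C_+)$, the bounded $\A_S(\C_+)$-calculus for $A$ on Hilbert space from \cite{ALMZ}, and Remark~(d) to handle the passage from $\B_0$ to $\B$ by adjoining constants. Your extra care about the constant functions is exactly the point that Remark~(d) is designed to address, so nothing is missing.
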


%\subsection*{Acknowledgements}
%Place all thanks and grant acknowledgements here.

%%%%%%%%%%% To ease editing, use normal size for the references:

\normalsize

\end{document}